 \newtheorem{thm}{Theorem}[section]
 \newtheorem{lem}[thm]{Lemma}
 \theoremstyle{definition}
 \newtheorem{defn}[thm]{Definition}
 \theoremstyle{remark}
 \newtheorem{ex}{Example}
 \numberwithin{equation}{section}
\begin{document}

%
%
%
%
%
%
%
%
%

\title[Existence and uniqueness of fixed point for ordered contraction type ]
 {Existence and uniqueness of fixed point for ordered contraction type operator in Banach Space
  }


\author[Jinxiu Mao]{Jinxiu Mao}

\address{
Department of Mathematics,
 Qufu Normal University, Qufu,
 Shandong, 273165, P. R. of
China}

\email{maojinxiu1982@163.com}

\thanks{Supported by the Natural Science Foundation of China(11571197), Natural Science Foundation of Shandong Province(ZR2014AM007) and the Science
Foundation of Qufu Normal University of China (XJ201112).}
\author{Zengqin Zhao}
\address{School of Mathematics,
 Qufu Normal University, Qufu,
 Shandong, 273165, P. R. of
China}
\email{zqzhaoy@163.com}
\subjclass{47H10}

\keywords{Fixed point, Decreasing operator, Cone.}

\date{September 15, 2014}
\dedicatory{}

\begin{abstract}
In this paper, we investigate the existence and
uniqueness of fixed point for partially ordered contraction type
operators in Banach Space. We also present applications to integral and differential equations.
\end{abstract}

\maketitle
\section{Introduction}
Existence of fixed points for contraction type maps in partially ordered metric space has been considered recently in \cite{c}-\cite{M}, where some applications to matrix equation,
ordinary differential equations and integral equations are presented, see \cite{ZHM}-\cite{HQ3}. The following generalization of Banach's contraction principle is due to Geraghty \cite{MG}.

Let $\zeta$ denotes the class of those functions $\beta: [0, \infty)\rightarrow [0,1)$ which satisfy the condition
\begin{equation}\beta(t_{n})\rightarrow 1 \Rightarrow t_{n}\rightarrow 0. \end{equation}
\begin{thm}
 Let $(M,\ d)$ be a complete metric space and let $f: M\rightarrow M$ be a map. Suppose there exists $\beta\in \zeta$ such that for
each $x,\ y\in M$,
\begin{equation}d(f(x),f(y))\leq \beta(d(x,y))d(x,y). \end{equation}
Then f has a unique fixed point $z\in M$, and $\{f^{n}(x)\}$ converges to $z$, for each $x\in M$.
\end{thm}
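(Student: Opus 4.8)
The plan is the classical one: run the Picard iteration $x_{n+1}=f(x_n)$ from an arbitrary starting point, show the sequence of consecutive distances goes to $0$, then upgrade this to the Cauchy property, and finally pass to the limit. Fix $x\in M$, put $x_0=x$ and $x_{n+1}=f(x_n)$. If $x_n=x_{n+1}$ for some $n$ we already have a fixed point, so assume $x_n\neq x_{n+1}$ for all $n$ and write $d_n=d(x_n,x_{n+1})$. The contractive hypothesis gives $d_{n+1}\le \beta(d_n)\,d_n< d_n$, so $(d_n)$ is strictly decreasing and bounded below, hence converges to some $r\ge 0$. To see $r=0$: from $d_{n+1}/d_n\le \beta(d_n)<1$ and $d_n,d_{n+1}\to r$, if $r>0$ then $d_{n+1}/d_n\to 1$, forcing $\beta(d_n)\to 1$, and the defining property of $\zeta$ then yields $d_n\to 0$, contradicting $r>0$. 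Thus $d_n\to 0$.

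The step I expect to be the real obstacle is showing $(x_n)$ is Cauchy, since $d_n\to 0$ by itself is not enough. I would argue by contradiction: if $(x_n)$ is not Cauchy, there exist $\varepsilon>0$ and indices $m_k>n_k\ge k$ with $d(x_{m_k},x_{n_k})\ge \varepsilon$, and choosing $m_k$ minimal with this property gives $d(x_{m_k-1},x_{n_k})<\varepsilon$. The triangle inequality together with $d_n\to 0$ then forces $d(x_{m_k},x_{n_k})\to \varepsilon$. Applying the contractive inequality to the pair $(x_{m_k},x_{n_k})$ and sandwiching $d(x_{m_k},x_{n_k})$ between $d(x_{m_k+1},x_{n_k+1})$ and the vanishing terms $d_{m_k},d_{n_k}$ gives
\[
d(x_{m_k},x_{n_k})-d_{m_k}-d_{n_k}\;\le\;\beta\big(d(x_{m_k},x_{n_k})\big)\,d(x_{m_k},x_{n_k})\;<\;d(x_{m_k},x_{n_k}).
\]
Letting $k\to\infty$ squeezes $\beta\big(d(x_{m_k},x_{n_k})\big)\to 1$, so the $\zeta$-property again gives $d(x_{m_k},x_{n_k})\to 0$, contradicting $d(x_{m_k},x_{n_k})\ge \varepsilon$. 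Hence $(x_n)$ is Cauchy.

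By completeness $x_n\to z$ for some $z\in M$. To identify $z$ as a fixed point I would estimate $d(z,f(z))\le d(z,x_{n+1})+d(f(x_n),f(z))\le d(z,x_{n+1})+\beta(d(x_n,z))\,d(x_n,z)\le d(z,x_{n+1})+d(x_n,z)\to 0$, so $f(z)=z$; since the starting point was arbitrary, $\{f^{n}(x)\}$ converges to a fixed point for every $x\in M$. For uniqueness, if $f(z)=z$ and $f(w)=w$ with $z\neq w$, then $d(z,w)=d(f(z),f(w))\le \beta(d(z,w))\,d(z,w)<d(z,w)$, a contradiction; hence the fixed point is unique and every Picard sequence converges to it.
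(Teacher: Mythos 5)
Your proof is correct. Note that the paper states this result (Geraghty's theorem) only as a quotation from the reference [MG] and supplies no proof of its own, so there is nothing internal to compare against; your argument --- Picard iteration, the $\zeta$-property forcing $d(x_n,x_{n+1})\to 0$, the minimal-index contradiction argument for the Cauchy property, and the standard passage to the limit plus the one-line uniqueness --- is the classical proof of this theorem, and every step checks out.
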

In \cite{JK}, J. Harjani and K. Sadarangani studied fixed point theorems for weakly contractive mappings in partially
ordered sets.
Very recently, Amini-Harandi and Emami \cite{AH} proved the following existence theorem which is a version of Theorem 1.1 in the context of partially ordered complete metric spaces and a generalization of results in \cite{JK}:
\begin{thm}
 Let $(M, \preceq)$ be a partially ordered set and suppose that there exists a metric $d$ in $M$ such that $(M, \ d)$ is a complete
metric space. Let $f: M\rightarrow M$ be an increasing map such that there exists an element $x_0\in M$ with $x_0\preceq f(x_0)$. Suppose
that there exists $\beta\in \zeta$ such that
\begin{equation}d(f(x),f(y))\leq \beta(d(x,y))d(x,y), \ \ x,\ y \in M, y\preceq x. \end{equation}
Assume that either $f$ is continuous or $M$ is such that
\begin{equation*}
\text{if an increasing sequence}\  \{x_{n}\}\rightarrow x\  \text{in}\  M, \text{then}\  x_{n}\preceq x, \forall n.
\end{equation*}
Besides, if
\begin{equation*}
\text{for each}\  x,y\in M, \text{there exists}\  z\in M, \text{which is comparable to}\  x \text{and}\  y.
\end{equation*}
Then f has a unique fixed point.
\end{thm}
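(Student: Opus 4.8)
The plan is to run the Picard iteration from the given point $x_0$, prove the orbit is Cauchy, and then identify its limit as the unique fixed point. Set $x_{n+1}=f(x_n)$ for $n\geq 0$; we may assume $x_0\neq f(x_0)$, since otherwise $x_0$ is already a fixed point. Because $x_0\preceq f(x_0)=x_1$ and $f$ is increasing, induction gives $x_n\preceq x_{n+1}$ for all $n$, so the orbit is a chain; in particular any two iterates are comparable. Writing $d_n=d(x_n,x_{n+1})$ and applying the contraction hypothesis to the comparable pair $x_n\preceq x_{n+1}$ yields $d_{n+1}\leq\beta(d_n)\,d_n\leq d_n$, so $\{d_n\}$ is non-increasing; we may also assume every $d_n>0$, otherwise some $x_N=f(x_N)$. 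Thus $d_n\to r$ for some $r\geq 0$, and if $r>0$ then $d_{n+1}/d_n\to 1$ forces $\beta(d_n)\to 1$, whence $d_n\to 0$ by the defining property of $\zeta$ --- a contradiction. Hence $d_n\to 0$.

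The crux is showing $\{x_n\}$ is Cauchy, and this is the step I expect to be the main obstacle, because the hypothesis only controls consecutive iterates. I would argue by contradiction in the usual way: if $\{x_n\}$ is not Cauchy, there are $\varepsilon>0$ and indices $m_k>n_k\geq k$ with $D_k:=d(x_{m_k},x_{n_k})\geq\varepsilon$, where $m_k$ is chosen least with this property, so that $d(x_{m_k-1},x_{n_k})<\varepsilon$. The triangle inequality then squeezes $D_k\to\varepsilon$. Since $n_k<m_k$ gives $x_{n_k}\preceq x_{m_k}$, the contraction hypothesis applies to that pair, and combining
\[
D_k\leq d_{m_k}+d(x_{m_k+1},x_{n_k+1})+d_{n_k}\leq d_{m_k}+d_{n_k}+\beta(D_k)\,D_k
\]
with $d_{m_k},d_{n_k}\to 0$ yields $\bigl(1-\beta(D_k)\bigr)D_k\to 0$. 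As $D_k\to\varepsilon>0$ this forces $\beta(D_k)\to 1$, hence $D_k\to 0$ by the $\zeta$-property, contradicting $D_k\to\varepsilon$. So $\{x_n\}$ is Cauchy, and by completeness $x_n\to z$ for some $z\in M$.

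Next I would check $f(z)=z$ using the two alternative hypotheses. If $f$ is continuous, then $z=\lim x_{n+1}=\lim f(x_n)=f(z)$ at once. If instead $M$ has the stated regularity property, then the increasing sequence $\{x_n\}$ converging to $z$ gives $x_n\preceq z$ for all $n$; applying the contraction hypothesis to the comparable pair $x_n\preceq z$ gives $d(f(z),x_{n+1})=d(f(z),f(x_n))\leq\beta(d(z,x_n))\,d(z,x_n)\leq d(z,x_n)\to 0$, so $x_{n+1}\to f(z)$ and uniqueness of limits forces $f(z)=z$.

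For uniqueness, suppose $z$ and $w$ are both fixed points. The extra hypothesis provides $u\in M$ comparable to both. Since $f$ is increasing and $z=f^n(z)$ for all $n$, the element $f^n(z)=z$ stays comparable to $f^n(u)$; setting $a_n=d(z,f^n(u))$ and applying the contraction hypothesis gives $a_{n+1}\leq\beta(a_n)\,a_n\leq a_n$, so by the same monotone-sequence argument as above $a_n\to 0$, i.e.\ $f^n(u)\to z$. Running the identical argument with $w$ in place of $z$ gives $f^n(u)\to w$, so $z=w$. Apart from the Cauchy step, every part is a routine combination of monotonicity of a scalar sequence with the defining implication for the class $\zeta$.
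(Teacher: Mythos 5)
Your proof is correct. The paper states this theorem without proof, citing Amini-Harandi and Emami \cite{AH}, and your argument is essentially the standard one from that reference: monotone Picard orbit, the $\zeta$-property forcing $d(x_n,x_{n+1})\to 0$, the minimal-index contradiction argument for the Cauchy property (which works because the orbit is a chain, so $x_{n_k}\preceq x_{m_k}$ and the contraction hypothesis applies), and the comparable element $u$ for uniqueness.
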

 In this paper, we generalize Theorem 1.2 from three aspects. Firstly, the contraction condition (\ref{eq1}) is merely about partial order, while in (1.3) the contraction is about metric and $\beta: [0, \infty)\rightarrow [0,1)$. The major difficult brought by (\ref{eq1}) is that in (\ref{eq*}) the contraction constant $Nf(\parallel u-v\parallel)$ may bigger than 1, as the normal constant $N$ of a cone is bigger than 1, see \cite{Sh} in Lemma 2.1. Secondly, we do not need continuity or the equivalent condition of the operator as in \cite{NR}-\cite{AH}. Thirdly, we don't need any upper or lower solution as in \cite{NR}-\cite{AH}.
 Our methods are different from that in \cite{AH}. In Section 3, an application to an integral equation is given.

Let us recall some preliminaries first.
\begin{defn}[\cite{GL}]\label{l1}
Let $E$ be a real Banach space. A nonempty convex closed set $P\subset E$ is called a cone if

$(i)$ $x\in P,\ \lambda \geq0\Rightarrow \lambda x\in P;$

$(ii)$ $x\in P,\ -x\in P\Rightarrow x=\theta,$ $\theta$ is the zero element in $E$.
\end{defn}
In the case that $P$ is a given cone in a real Banach space $(E, \parallel.\parallel)$, a partial
order "$\leq$" can be induced on $E$ by $x\leq y\Leftrightarrow y-x\in P.$ The cone $P$ is called
normal if there exists a constant $N > 0$, such that for all $x, y\in E,
\theta\leq x\leq y$ implies
that $\parallel x \parallel\leq N\parallel y\parallel$.
 The minimal such number $N$ is called the normal constant of $P$.
Details about cones and fixed point of operators can be found in \cite{GL}-\cite{B}.

\begin{lem}[\cite{GL}]\label{l1} A cone $P$ is normal if and only if
there exists a norm $\parallel.\parallel_{1}$ in $E$ which is
equivalent to $\parallel.\parallel$ such that for any $\theta\leq
x\leq y, \parallel x\parallel_{1}\leq \parallel
y\parallel_{1}$, i,e., $\parallel.\parallel_{1}$ is monotone. The equivalence of $\parallel.\parallel$ and $\parallel.\parallel_{1}$  means that there
exist $M>m>0$ such that $m\parallel.\parallel_{1}\leq
\parallel.\parallel\leq M\parallel.\parallel_{1}$.
\end{lem}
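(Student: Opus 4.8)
\medskip
\noindent\textit{Plan of proof.} The plan is to treat the two implications separately; the ``if'' direction is routine, while the ``only if'' direction carries the content and requires constructing the equivalent monotone norm explicitly.

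For the ``if'' part, suppose an equivalent monotone norm $\|\cdot\|_{1}$ is given, with $m\|\cdot\|_{1}\le\|\cdot\|\le M\|\cdot\|_{1}$ for some $M>m>0$. If $\theta\le x\le y$, monotonicity gives $\|x\|_{1}\le\|y\|_{1}$, hence
$$\|x\|\le M\|x\|_{1}\le M\|y\|_{1}\le \frac{M}{m}\|y\|,$$
so $P$ is normal with normal constant at most $M/m$.

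For the converse --- the main step --- assume $P$ is normal with normal constant $N$ and let $B=\{x\in E:\|x\|\le1\}$ be the closed unit ball. I would take $\|\cdot\|_{1}$ to be the Minkowski functional of the order-convex hull of $B$,
$$\Omega=\{x\in E:\ a\le x\le b\ \text{for some}\ a,b\in B\}=(B+P)\cap(B-P),$$
that is, $\|x\|_{1}=\inf\{t>0:x\in t\Omega\}$, and then verify the following in turn. (i) $\Omega$ is convex and symmetric about $\theta$, both inherited from the corresponding properties of $B$ and the compatibility of ``$\le$'' with the linear operations (the representation $\Omega=(B+P)\cap(B-P)$ makes this transparent); since $\theta\in\Omega$ it is balanced, and since $B\subseteq\Omega$ (take $a=b=x$) it is absorbing, so $\|\cdot\|_{1}$ is a seminorm. (ii) The normality estimate: if $a\le x\le b$ with $a,b\in B$, then $\theta\le x-a\le b-a$, whence $\|x-a\|\le N\|b-a\|\le 2N$ and $\|x\|\le 2N+1$; thus $B\subseteq\Omega\subseteq(2N+1)B$. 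Consequently $\|\cdot\|_{1}$ is in fact a norm (positive-definiteness comes from $\Omega\subseteq(2N+1)B$) and $\frac{1}{2N+1}\|x\|\le\|x\|_{1}\le\|x\|$, so it is equivalent to $\|\cdot\|$.

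(iii) Monotonicity, where the decisive point is that $\theta\in B$: if $\theta\le x\le y$ and $t>\|y\|_{1}$, write $a\le y/t\le b$ with $a,b\in B$; then $\theta\le x/t\le y/t\le b$ with $\theta,b\in B$, so $x/t\in\Omega$ and $\|x\|_{1}\le t$, and letting $t\downarrow\|y\|_{1}$ gives $\|x\|_{1}\le\|y\|_{1}$. I expect step (ii) --- deducing boundedness of $\Omega$ from normality --- to be the only place where the hypothesis genuinely does its work, and the replacement of the possibly useless lower bound $a$ by $\theta$ in step (iii) to be the one small trick; everything else is formal. If a closed unit ball is wanted for $\|\cdot\|_{1}$ one may pass to $\overline{\Omega}$, but then step (iii) needs the extra remark that $\theta\le x/t$ and $x/t\le b$ survive the limiting process, so it is cleaner to argue with $\Omega$ itself.
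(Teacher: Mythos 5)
Your proof is correct, and it is precisely the classical construction (Minkowski functional of the full hull $(B+P)\cap(B-P)$ of the unit ball, with normality supplying the boundedness of that hull) used in the cited reference of Guo and Lakshmikantham; the paper itself states this lemma without proof, so there is nothing further to compare.
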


\begin{lem}[\cite{GL}]\label{l1} Let $P$ be a normal cone in a real Banach space $E$. Suppose that $\{x_{n}\}$ is a monotone sequence
 which has a subsequence $\{x_{n_{i}}\}$ converging to $x^{\ast}$, then $\{x_{n}\}$ also converges to $x^{\ast}$. Moreover, if $\{x_{n}\}$
 is an increasing sequence, then $\{x_{n}\}\leq x^{\ast} (n=1,2,3,...)$; if $\{x_{n}\}$
 is a decreasing sequence, then $x^{\ast}\leq \{x_{n}\} (n=1,2,3,...)$.
\end{lem}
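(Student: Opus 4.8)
The plan is to reduce to the case of a monotone norm and to treat an increasing sequence, the decreasing case being entirely symmetric. By the preceding lemma on equivalent monotone norms there is a norm $\|\cdot\|_1$ on $E$, equivalent to $\|\cdot\|$, that is monotone on $P$; since equivalent norms share the same convergent sequences and the same limits, it suffices to prove $x_n\to x^*$ in $\|\cdot\|_1$, so I may assume from the outset that the given norm is itself monotone.

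Assume $\{x_n\}$ is increasing. The first step is to show $x_n\le x^*$ for every $n$. Fix $n$; for each index $i$ with $n_i\ge n$ we have $x_n\le x_{n_i}$, i.e. $x_{n_i}-x_n\in P$, and letting $i\to\infty$ while using that $P$ is closed yields $x^*-x_n\in P$, that is, $x_n\le x^*$.

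The second step is a squeeze on the norms. For $m\ge n$, monotonicity of $\{x_n\}$ together with $x_m\le x^*$ gives $\theta\le x^*-x_m\le x^*-x_n$, whence $\|x^*-x_m\|\le\|x^*-x_n\|$ by monotonicity of the norm. Thus $a_n:=\|x^*-x_n\|$ is a nonincreasing sequence of nonnegative reals, so $a_n\to L$ for some $L\ge 0$; but the given subsequence satisfies $a_{n_i}=\|x^*-x_{n_i}\|\to 0$, and a convergent real sequence has the same limit as each of its subsequences, so $L=0$ and $x_n\to x^*$. The decreasing case runs identically after interchanging $x_n$ and $x^*$ in every difference: one obtains $x^*\le x_n$ for all $n$, then $\theta\le x_n-x^*\le x_m-x^*$ for $n\ge m$, and the same argument gives $\|x_n-x^*\|\to 0$.

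I do not expect a genuine obstacle; the only point deserving care is the passage from $x_{n_i}-x_n\in P$ to $x^*-x_n\in P$, which uses precisely the closedness of the cone (and the fact that closedness, being topological, is unaffected by the equivalent renorming). Should one prefer to avoid renorming, normality gives $\|x^*-x_m\|\le N\|x^*-x_n\|$ for $m\ge n$ directly, and squeezing against the subsequence $a_{n_i}\to 0$ again forces $a_n\to 0$.
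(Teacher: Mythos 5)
Your proof is correct, and since the paper cites this lemma from [GL] without giving a proof, there is nothing in-text to compare against; your argument (pass to the equivalent monotone norm, use closedness of $P$ to get $x_n\leq x^{\ast}$, then squeeze the nonincreasing sequence $\|x^{\ast}-x_n\|_1$ against the null subsequence) is the standard one found in the reference.
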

\section{Main results}
We suppose that $E$ is an partially ordered Banach
space. $P$ is a normal cone and the normal
constant is $N$. The partial order "$\leq$" on $E$ is induced by the cone $P$.
\begin{thm}[Main Theorem]
 Suppose that $A:E\rightarrow E$ is a decreasing operator and
satisfies the following ordered contraction type condition:

(H) There exists an increasing function $f:(0,
+\infty)\rightarrow(0,1)$ such that
\begin{equation}\label{eq1}Au-Av\leq f(\parallel v-u\parallel)(v-u),\ \ \forall u,v\in E,\ u\leq v. \end{equation}
Besides, if
\begin{equation}\label{eq}
\text{for each}\  x,y\in E, \text{there exist both}\  \inf\{x,y\}\  \text{and}\  \sup\{x,y\}.
\end{equation}
Then $A$ has unique fixed point in $E$.
\end{thm}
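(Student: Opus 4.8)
The plan is to run a monotone iteration for the increasing operator $A^{2}$, but carried out in the equivalent \emph{monotone} norm that Lemma 1.4 attaches to the normal cone $P$; this renorming is exactly what is needed to get around the difficulty the authors point out, namely that from (H) and normality one only obtains $\|Au-Av\|\le N f(\|v-u\|)\,\|v-u\|$, with $Nf$ possibly $\ge 1$. So first I would fix a norm $\|\cdot\|_{1}$ with $m\|\cdot\|_{1}\le\|\cdot\|\le M\|\cdot\|_{1}$ and $\theta\le x\le y\Rightarrow\|x\|_{1}\le\|y\|_{1}$, and make all the estimates in $\|\cdot\|_{1}$; since $\|\cdot\|_{1}\sim\|\cdot\|$ and $(E,\|\cdot\|)$ is complete, Cauchyness in $\|\cdot\|_{1}$ still produces limits in $E$.

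Next I would pick any $x_{0}\in E$ and set $u_{0}=\inf\{x_{0},Ax_{0}\}$, which exists by \eqref{eq} — this is the one place the lattice hypothesis is used to launch the iteration, since $x_{0}$ and $Ax_{0}$ need not be comparable. From $u_{0}\le x_{0}$ and $A$ decreasing, $Ax_{0}\le Au_{0}$; combined with $u_{0}\le Ax_{0}$ this gives $u_{0}\le Au_{0}$. Since $A$ is decreasing, $A^{2n}$ is increasing and $A^{2n+1}$ is decreasing, so applying these to $u_{0}\le Au_{0}$ yields $A^{2n}u_{0}\le A^{2n+1}u_{0}$ and $A^{2n+2}u_{0}\le A^{2n+1}u_{0}$, hence
\[
p_{n}:=A^{2n+1}u_{0}-A^{2n}u_{0}\in P,\qquad q_{n}:=A^{2n+1}u_{0}-A^{2n+2}u_{0}\in P .
\]
If some $p_{n}$ or $q_{n}$ equals $\theta$, the corresponding iterate is already a fixed point and I would stop; otherwise I proceed with all of them nonzero.

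The heart of the argument is then to feed the two comparable pairs $A^{2n}u_{0}\le A^{2n+1}u_{0}$ and $A^{2n+2}u_{0}\le A^{2n+1}u_{0}$ into (H), getting $\theta\le q_{n}\le f(\|p_{n}\|)p_{n}$ and $\theta\le p_{n+1}\le f(\|q_{n}\|)q_{n}$. Since $f<1$ this already forces $q_{n}\le p_{n}$ and $p_{n+1}\le q_{n}$, so $A^{2n+2}u_{0}-A^{2n}u_{0}=p_{n}-q_{n}\in P$ and $A^{2n+1}u_{0}-A^{2n+3}u_{0}=q_{n}-p_{n+1}\in P$: the even iterates increase, the odd iterates decrease, and every even iterate lies below every odd one. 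Passing to $\|\cdot\|_{1}$, $\|q_{n}\|_{1}\le f(\|p_{n}\|)\|p_{n}\|_{1}$ and $\|p_{n+1}\|_{1}\le f(\|q_{n}\|)\|q_{n}\|_{1}<\|p_{n}\|_{1}$; since $\{\|p_{n}\|_{1}\}$ then decreases, $\|p_{n}\|\le M\|p_{n}\|_{1}\le M\|p_{0}\|_{1}=:R$ and likewise $\|q_{n}\|\le R$, so by monotonicity of $f$ one gets $f(\|p_{n}\|),f(\|q_{n}\|)\le f(R)=:\gamma<1$, hence $\|p_{n}\|_{1}\le\gamma^{2n}\|p_{0}\|_{1}\to0$ and $\|q_{n}\|_{1}\to0$. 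Telescoping along the monotone chain ($\|A^{2m}u_{0}-A^{2n}u_{0}\|_{1}\le\sum_{k\ge n}\|p_{k}\|_{1}$, similarly for the odd iterates with the $q_{k}$) shows both subsequences are $\|\cdot\|_{1}$-Cauchy, hence $\|\cdot\|$-Cauchy, hence converge to $x^{*}$ and $y^{*}$; Lemma 1.5 and closedness of $P$ give $A^{2n}u_{0}\le x^{*}\le A^{2m+1}u_{0}$, so $\theta\le y^{*}-x^{*}\le p_{n}\to\theta$ forces $x^{*}=y^{*}$; and finally from $A^{2n}u_{0}\le x^{*}\le A^{2n+1}u_{0}=A(A^{2n}u_{0})$ with $A$ decreasing, $A^{2n+2}u_{0}\le Ax^{*}\le A^{2n+1}u_{0}$, i.e. $\theta\le Ax^{*}-A^{2n+2}u_{0}\le q_{n}\to\theta$, so $Ax^{*}=x^{*}$.

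For uniqueness, given another fixed point $\bar x$ I would set $w=\inf\{x^{*},\bar x\}$ and $W=\sup\{x^{*},\bar x\}$ (again via \eqref{eq}); since $A$ is decreasing and $w\le x^{*},\bar x\le W$, the element $Aw$ is an upper bound and $AW$ a lower bound of $\{x^{*},\bar x\}$, so $Aw\ge W$ and $AW\le w$, whence by (H) applied to $w\le W$, $W-w\le Aw-AW\le f(\|W-w\|)(W-w)$; if $W\ne w$ this gives $\|W-w\|_{1}\le f(\|W-w\|)\|W-w\|_{1}<\|W-w\|_{1}$, a contradiction, so $W=w$ and $x^{*}=\bar x$. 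I expect the genuine obstacle to be precisely the central estimate — extracting an honest contraction from (H) in spite of possibly $Nf\ge1$ — which is where Lemma 1.4, the monotonicity of $f$, and the a priori bound $\|p_{n}\|_{1}\le\|p_{0}\|_{1}$ all have to be used at once; everything else is routine monotone-sequence bookkeeping.
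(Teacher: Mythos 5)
Your proof is correct, and although it rests on the same two pillars as the paper's --- launching the iteration from $u_{0}=\inf\{x_{0},Ax_{0}\}$ so that $u_{0}\le Au_{0}$, and passing to the equivalent monotone norm $\|\cdot\|_{1}$ of Lemma 1.4 so that the ordered inequality (H) becomes a norm inequality without picking up the normal constant $N$, after which the decrease of the successive difference norms plus the monotonicity of $f$ yields a uniform ratio $\gamma<1$ --- it organizes both the existence and the uniqueness steps genuinely differently. The paper squares the operator and works with the single increasing map $B=A^{2}$, which satisfies the compound contraction $Bv-Bu\le f(Nf(\|u-v\|)\|u-v\|)f(\|u-v\|)(v-u)$; it produces one increasing sequence $B^{n}x_{0}\to x_{\ast}$, proves uniqueness of the fixed point of $B$ by a two-case comparability analysis (manufacturing comparable companions via $\inf$ and $\sup$ and iterating), and only then deduces $Ax_{\ast}=x_{\ast}$ from the fact that $Ax_{\ast}$ is another fixed point of $B$. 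You instead track the even (increasing) and odd (decreasing) iterates of $A$ itself, apply (H) once per step to the interlaced differences $p_{n},q_{n}$, and squeeze the two monotone sequences to a common limit, which hands you $Ax^{*}=x^{*}$ directly without any appeal to uniqueness; and your uniqueness argument --- noting that $A$ sends $w=\inf\{x^{*},\bar x\}$ to an upper bound and $W=\sup\{x^{*},\bar x\}$ to a lower bound of both fixed points, so that one application of (H) to the pair $w\le W$ forces $W=w$ --- is a one-shot replacement for the paper's case analysis. The paper's route isolates a reusable statement for increasing operators satisfying (a)--(b); yours avoids the compound contraction coefficient altogether and gives a noticeably cleaner uniqueness step. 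The only points needing the care you already flagged are that $f$ is undefined at $0$, so the degenerate cases $p_{n}=\theta$, $q_{n}=\theta$ and $W=w$ must be split off before invoking (H).
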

\begin{proof}
Let $u_{0}\in E,$ we have $Au_{0}\in E.$ So we have the following two cases.

\textbf{Case I}: When $u_{0}$ is comparable to $Au_{0.}$ Firstly,
without loss of generality, we suppose that
\begin{equation}
 u_{0}\leq Au_{0}.
\end{equation}
If $u_{0}=Au_{0},$ then the proof is finished. Suppose that $u_{0}<
Au_{0}.$ Since $A$ is decreasing we obtain $Au_{0}\geq
A^{2}u_{0}$ and it is easy to prove that $A^{2}$ is increasing. Using the
contractive condition (\ref{eq1}), we have
\begin{equation}Au_{0}-A^{2}u_{0}\leq f(\parallel Au_{0}-u_{0}\parallel)(Au_{0}-u_{0})\leq Au_{0}-u_{0}.\end{equation}
So $A^{2}u_{0}\geq u_{0},$ that is \begin{equation}\label{eq3}u_{0}\leq A^{2}u_{0} .\end{equation} From
(\ref{eq1}) and the normality of cone $P$, we have
\begin{equation}\label{eq*}\parallel Au-Av\parallel\leq Nf(\parallel v-u\parallel)\parallel v-u\parallel,\ \ \forall u,v\in E,\ u\leq v,\end{equation}
\begin{align*}
A^{2}v-A^{2}u & \leq f(\parallel Au-Av\parallel)(Au-Av)\\
&\leq f(\parallel Au-Av\parallel) f(\parallel u-v\parallel)(v-u)\\
&\leq f(Nf(\parallel u-v\parallel)\parallel u-v\parallel)f(\parallel u-v\parallel)(v-u).
\end{align*}
Let $A^{2}=B.$ From (\ref{eq3}) and the above inequalities we have the following two conclusions:

(a) There exists a nondecreasing function $f:(0,
+\infty)\rightarrow(0,1)$ such that for $u,v\in E$ with $u\leq v$
\begin{equation}\label{eq7}Bv-Bu\leq f(Nf(\parallel u-v\parallel)\parallel u-v\parallel)f(\parallel u-v\parallel)(v-u),\end{equation}

(b) There exists $u_{0}\in E$ such that $u_{0}\leq Bu_{0}.$

We assert that the operator $B$ has unique fixed point in $E$. In fact, we can
use the method of iteration to construct the fixed point of $B$. Consider the iterative sequence
\begin{equation}x_{n+1}=Bx_{n},\ \ n=0,1,2,\cdots.\end{equation}
Since $x_{0}\leq Bx_{0}$ and the operator $B$ is increasing, we
have
\begin{equation}\label{eq5}x_{0}\leq x_{1}\leq\cdots\leq x_{n\leq\cdots}.\end{equation} This means that
$\{x_{n}\}$ is an increasing sequence.
So
\begin{align*}\label{eq4}
\theta &\leq x_{n+1}-x_n \\
   &= Bx_n-Bx_{n-1} \\
   &\leq f(Nf(\|x_n-x_{n-1}\|)\|x_n-x_{n-1}\|)f(\|x_n-x_{n-1}\|)(x_n-x_{n-1}).
\end{align*}
Since $P$ is normal, from Lemma 1.5 we have
\begin{equation*}
  \|x_{n+1}-x_n\|_1\leq f(Nf(M\|x_n-x_{n-1}\|_1)M\|x_n-x_{n-1}\|_1)f(M\|x_n-x_{n-1}\|_1)\|x_n-x_{n-1}\|_1.
\end{equation*}
Since $f(t)\in (0,1)$ for all $t\geq0$, so $\|x_{n+1}-x_n\|_1 \leq\|x_n-x_{n-1}\|_1$, i.e., $\{\|x_n-x_{n-1}\|_1\}(n=1,2,\cdots)$
is a nonnegative decreasing sequence. From $f$ is increasing we know
\begin{equation*}
  f(M\|x_n-x_{n-1}\|_1)\leq f(M\|x_1-x_0\|_1)<1.
\end{equation*}
So
\begin{equation*}
  \|x_{n+1}-x_n\|_1\leq f(Nf(M\|x_1-x_0\|_1)M\|x_1-x_0\|_1)f(M\|x_1-x_0\|_1)\|x_n-x_{n-1}\|_1.
\end{equation*}
Let $\lambda=f(Nf(M\|x_1-x_0\|_1)M\|x_1-x_0\|_1)f(M\|x_1-x_0\|_1),$ then $\lambda\in(0,1).$
So
\begin{equation*}
  \|x_{n+1}-x_n\|_1\leq \lambda\|x_n-x_{n-1}\|_1\leq...\leq \lambda^n\|x_1-x_0\|_1.
\end{equation*}
We can assert that $\{x_{n}\}$ is a Cauchy sequence in $(E, \parallel.\parallel).$ In fact, for any positive integar $n,m$,
\begin{align*}
  \parallel x_{n+m}-x_{n}\parallel_{1} &\leq \parallel x_{n+m}-x_{n+m-1}\parallel_{1}+\cdots+ \parallel x_{n+1}-x_{n}\parallel_{1} \\
   &\leq (\lambda^{n+m-1}+\cdots+\lambda^{n})\parallel x_{1}-x_{0}\parallel_{1} \\
   &\leq \frac{\lambda^{n}}{1-\lambda}\parallel x_{1}-x_{0}\parallel_{1}.
\end{align*}
It follows in a standard way that $\{x_{n}\}$ is a Cauchy sequence
in $E.$ Since $E$ is complete, we can suppose that $x_{n}\rightarrow x_{\ast}\in E.$ (\ref{eq5}) together with Lemma 1.5 implies that
 \begin{equation}\label{eq6}
 x_{n}\leq x_{\ast}.
 \end{equation}
 (\ref{eq6}), together with (\ref{eq7}) and the equivalence of $\parallel.\parallel_{1}$ and $\parallel.\parallel$ implies that
\begin{equation*}
\parallel Bx_{\ast}-Bx_{n}\parallel_{1}\leq f(Nf(M\|x_{\ast}-x_{n}\|_1)M\|x_{\ast}-x_{n}\|_1)f(M\|x_{\ast}-x_{n}\|_1)\|x_{\ast}-x_{n}\|_1.
\end{equation*}
So
\begin{align*}
  \parallel x_{\ast}-Bx_{\ast}\parallel_{1} &\leq \parallel x_{\ast}-x_{n+1}\parallel_{1}+\parallel Bx_{\ast}-x_{n+1}\parallel_{1}\\
 &=\parallel x_{\ast}-x_{n+1}\parallel_{1}+\parallel Bx_{\ast}-Bx_{n}\parallel_{1} \\
   &\leq \parallel x_{\ast}-x_{n+1}\parallel_{1}\\
   &+f(Nf(M\|x_{\ast}-x_{n}\|_1)M\|x_{\ast}-x_{n}\|_1)f(M\|x_{\ast}-x_{n}\|_1)\|x_{\ast}-x_{n}\|_1.
\end{align*}
Let $n\rightarrow\infty,$ we obtain $\parallel x_{\ast}-Bx_{\ast}\parallel_{1}=0.$ So $x_{\ast}=Bx_{\ast}$, i.e., $x_{\ast}$ is a fixed point of $B$ in $E$
and $\lim_{n\rightarrow \infty}Bx_{n}=\lim_{n\rightarrow \infty}B^{n}x_{0}=Bx_{\ast}=x_{\ast}$.

Then we will prove the uniqueness of the fixed point. On the contrary, if $\overline{x}$ is another fixed point of $B,$ we will get $\overline{x}=x_{\ast}.$
In fact, the first case, when $\overline{x}$ is comparable with $x_0.$ Without loss of generality, we suppose that $\overline{x}\leq x_0.$
Since $B$ is increasing, $B^{n}\overline{x}\leq B^{n}x_{0}$. Similar to the proof of the monotonicity of the sequence $\{\|x_n-x_{n-1}\|_1\}(n=1,2,3,\cdots)$, we can obtain
$\{\parallel B^{n}\overline{x}-B^{n}x_{0}\parallel_{1}\}(n=0,1,2,\cdots)$ is also a increasing sequence and
\begin{equation*}
\parallel B^{n}\overline{x}-B^{n}x_{0}\parallel_{1}\leq \lambda_{1}\parallel B^{n-1}\overline{x}-B^{n-1}x_{0}\parallel_{1}\leq\cdots\leq\lambda_{1}^{n}\parallel\overline{x}-x_{0}\parallel_{1},
\end{equation*}
in which $\lambda_1=f(Nf(M\|\overline{x}-x_0\|_1)M\|\overline{x}-x_0\|_1)f(M\|\overline{x}-x_0\|_1),$ then $\lambda_{1}\in(0,1).$ Let $n\rightarrow\infty,$ we have
\begin{equation}\label{eq11}
\overline{x}=\lim_{n\rightarrow \infty}B^{n}\overline{x}=\lim_{n\rightarrow \infty}B^{n}x_{0}=x_{\ast}.
\end{equation}The second case, when $\overline{x}$ can not compare with $x_0.$ From (\ref{eq1}), we obtain $x_1=\inf\{\overline{x},x_0\}, x_2=\sup\{\overline{x},x_0\} \in E$ satisfying
\begin{equation*}
x_{1}\leq \overline{x}\leq x_{2}, x_{1}\leq x_{0}\leq x_{2}
\end{equation*}
i.e., $\overline{x}$ is comparable with $x_1, x_2$ and $x_0$ is comparable with $x_1, x_2$.
Since $B$ is increasing, we know
\begin{equation*}
Bx_{1}\leq B\overline{x}\leq Bx_{2}, Bx_{1}\leq Bx_{0}\leq Bx_{2},
\end{equation*}
and for any natural number $n$
\begin{equation*}
B^{n}x_{1}\leq B^{n}\overline{x}\leq B^{n}x_{2}, B^{n}x_{1}\leq B^{n}x_{0}\leq B^{n}x_{2}.
\end{equation*}
 So we know $B^{n}\overline{x}$ can compare with $B^{n}x_{1}$ and
$B^{n}x_{2}$. Similarly we can prove that $\{\parallel B^nx_i-B^nx_0\parallel_{1}\}(n=0,1,2,\cdots)$ and $\{\|B^nx_i-B^n\overline{x}\|_1\}(n=0,1,2,\cdots)$ are also
nonnegative and decreasing sequences(in which $B^0x=x$) and
\begin{equation}\label{eq8}
\|B^nx_i-B^nx_0\|_1\leq\lambda_{2i}\|B^{n-1}x_i-B^{n-1}x_0\|_1\leq...\leq\lambda^n_{2i}\|x_i-x_0\|,
\end{equation}
\begin{equation}\label{eq9}
\|B^nx_i-B^n\overline{x}\|_1\leq\lambda_{3i}\|B^{n-1}x_i-B^{n-1}\overline{x}\|_1\leq...\leq\lambda^n_{3i}\|x_i-\overline{x}\|, i=1,2,
\end{equation}
in which
\begin{equation*}
  \lambda_{2i}=f(Nf(M\|x_i-x_0\|_1)M\|x_i-x_0\|_1)f(M\|x_i-x_0\|_1),
\end{equation*}
\begin{equation*}
  \lambda_{3i}=f(Nf(M\|\overline{x}-x_i\|_1)M\|\overline{x}-x_i\|_1)f(M\|\overline{x}-x_i\|_1),
\end{equation*}
$\lambda_{2i}, \lambda_{3i}\in(0,1).$
Let $n\rightarrow\infty$ in (\ref{eq8}) and (\ref{eq9}), we have
\begin{equation*}
\lim\limits_{n\to\infty}B^nx_i=\lim\limits_{n\to\infty}B^nx_0=x_{\ast},
\end{equation*}
\begin{equation*}
\lim\limits_{n\to\infty}B^nx_i=\lim\limits_{n\to\infty}B^n\overline{x}=\overline{x}.
\end{equation*}
So \begin{equation}\label{eq10}
\overline{x}=x_{\ast}.
\end{equation}

 (\ref{eq11}) together with (\ref{eq10}) implies that $x_{\ast}$ is unique fixed point of $B.$

Next we will prove that the unique fixed point of $B$ is also the unique fixed point of $A.$

 Since
\begin{equation*}
A^2x_{\ast}=Bx_{\ast}=x_{\ast},
\end{equation*}and
\begin{equation*}
A^2(Ax_{\ast})=A(A^2x_{\ast})=Ax_{\ast},
\end{equation*}
i.e., $B(Ax_{\ast})=Ax_{\ast}.$ From the uniqueness of the fixed point of $B$ we know
\begin{equation}
Ax_{\ast}=x_{\ast}.
\end{equation}
 So $x_{\ast}$ is the unique fixed point of $A$ in $E.$

 \textbf{Case II}: Another case, when $u_{0}$ is not comparable to $Au_{0.}$
From the assumption (H), we know
 there exists $v_0\in E$ such that $\inf\{Au_0, u_0\}=v_0.$ That is $v_0\leq Au_0, v_0\leq u_0.$ Since A is a decreasing operator, we have
\begin{equation*}
A^2u_0\leq Av_0, Au_0\leq Av_0.
\end{equation*}
This shows that
\begin{equation}
v_0\leq Av_0.
\end{equation} Similarly as the proof of Case I, we can get that $A$ has unique fixed
point in E.
\end{proof}

\section{Applications}
In this section, we present two examples where our Theorem can be applied.
\begin{ex}
We consider the self-feedback stability of a signal outlet function in nonlinear suppressed interference channel. When outlet signals are fed back to
the input process, we want to know whether the final signals are stable.
We suppose that the signal period is $1$. We only consider situations in a period. The signal space is $C[0,1]$ and the signal output
function is(only in the case of real number)
\begin{equation*}
  Au(t)=\frac{1}{2\pi+u(t)}-\frac{\pi^{2}}{16}\int_{0}^{1}(s^2+t^2)\frac{1+u(t)s^2}{2\pi M}ds,
\end{equation*}
$M$ is a positive integer. Let $P=\{u(t)|u(t)\geq0,\ t\in[0,1]\}$, then $P$ is a normal cone in $C[0,1].$
The partial order $\leq$ induced by $P$ is: $u\leq v\Leftrightarrow u(t)\leq v(t)$ for all $t\in [0,1].$
$E=C[0,1]$ is a partially ordered Banach space. Evidently, $A$ is a decreasing operator. For all
$u(t),\ v(t)\in E$ satisfying $u(t)\leq v(t)$, we obtain that
\begin{align*}
 Au(t)-Av(t) &= \frac{1}{2\pi+u(t)}-\frac{\pi^{2}}{16}\int_{0}^{1}(s^2+t^2)\frac{1+u(t)s^2}{2\pi M}ds \\
   &- \frac{1}{2\pi+v(t)}+\frac{\pi^{2}}{16}\int_{0}^{1}(s^2+t^2)\frac{1+v(t)s^2}{2\pi M}ds\\
   &= \frac{v(t)-u(t)}{(2\pi+u(t))(2\pi+v(t))}+\frac{\pi^{2}}{16}\int_{0}^{1}(s^2+t^2)\frac{(v(t)-u(t))s^2}{2\pi M}ds \\
   &\leq \frac{v(t)-u(t)}{4\pi^2}+\frac{\pi}{60M}(v(t)-u(t))\\
   &\leq \frac{3}{20}(v(t)-u(t)).
\end{align*}
When take $f(t)=\frac{3}{20}$ in Theorem 2.1, it is easy to know that the conclusion of Theorem 2.1 holds, i.e., there is unique $u^{\ast}\in E$ such that $Au^{\ast}=u^{\ast}$. This means that the signal outlet function has
self-feedback stability.
\end{ex}
\begin{ex}
 Now, we study the existence of solution for the following first-order periodic problem
\begin{equation}
\begin{cases}
u'(t)=F(t,u(t)), t\in[0,1]\\
u(0)=u(1).
\end{cases}
\end{equation}
 where $F:[0,1]\times R\rightarrow R$ is a continuous function.

We consider the space $C(I)(I=[0,1])$ of continuous functions defined on $[0,1]$. Obviously, this space with the metric given by
\begin{equation*}
d(x,y)=\sup|x(t)-y(t)|,t\in I, x,y\in C(I)
\end{equation*}
ia a Banach space. $C(I)$ can be equipped with a partial order induced by a cone
\begin{equation*}
P=\{y-x:y(t)-x(t)\geq0, t\in I\}.
\end{equation*}
Obviously, P is a normal cone and assume that its normal constant is N. And the order relation in $C(I)$ induced by $P$ is:
\begin{equation*}
x,y\in C(I), x\leq y \Leftrightarrow x(t)\leq y(t), t\in I.
\end{equation*}

\begin{thm}
Consider problem (3.1) with $F:I\times R\rightarrow R$ continuous and suppose that there exists $\lambda>0$ and $0<\alpha\leq \lambda N$ such that for $x,y\in R$ with $x\geq y$,
\begin{equation*}
0\leq F(t,y)+\lambda y-(F(t,x)+\lambda x)\leq \alpha (x-y)\ln[(1+\frac{1}{x-y})^{x-y}].
\end{equation*}
Then there exists unique solution for problem (3.1).
\end{thm}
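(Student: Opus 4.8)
The plan is to recast the periodic problem (3.1) as a fixed-point equation for a decreasing operator on $C(I)$ and then invoke Theorem 2.1. Fix the constant $\lambda>0$ from the hypothesis and rewrite $u'=F(t,u)$, $u(0)=u(1)$ equivalently as $u'+\lambda u=F(t,u)+\lambda u$, $u(0)=u(1)$. Multiplying by $e^{\lambda t}$, integrating, and imposing the periodicity condition shows that for $h\in C(I)$ the linear problem $u'+\lambda u=h$, $u(0)=u(1)$ has the unique solution $\int_0^1 G(t,s)h(s)\,ds$, where
\begin{equation*}
G(t,s)=\frac{1}{e^{\lambda}-1}
\begin{cases}
e^{\lambda(s-t+1)}, & 0\le s\le t\le 1,\\[1mm]
e^{\lambda(s-t)}, & 0\le t< s\le 1.
\end{cases}
\end{equation*}
Hence $u$ solves (3.1) if and only if $u\in C(I)$ is a fixed point of
\begin{equation*}
Au(t):=\int_0^1 G(t,s)\bigl[F(s,u(s))+\lambda u(s)\bigr]\,ds,\qquad t\in I.
\end{equation*}
Two properties of $G$ will be used: $G(t,s)>0$ on $I\times I$ (since $\lambda>0$), and, applying the representation to $h\equiv\lambda$ with solution $u\equiv1$, $\int_0^1 G(t,s)\,ds=1/\lambda$ for every $t$. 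One also checks in the usual way that $A$ maps $C(I)$ into itself.

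Next I would verify the hypotheses of Theorem 2.1. The operator $A$ is decreasing: if $u\le v$, i.e. $u(t)\le v(t)$ for all $t$, then the left-hand inequality in the hypothesis (with $x=v(s)$, $y=u(s)$) gives $F(s,v(s))+\lambda v(s)\le F(s,u(s))+\lambda u(s)$ for every $s$, and integrating against $G>0$ yields $Av\le Au$. Condition $(\ref{eq})$ is automatic in $C(I)$, where $\inf\{x,y\}$ and $\sup\{x,y\}$ are the continuous functions $t\mapsto\min\{x(t),y(t)\}$ and $t\mapsto\max\{x(t),y(t)\}$. It remains to check the ordered contraction condition (H). Put $\varphi(r)=\ln\bigl[(1+\tfrac1r)^{r}\bigr]=r\ln(1+\tfrac1r)$ for $r>0$; a short calculus check (using $\ln(1+x)>x/(1+x)$ and $(1+1/r)^{r}<e$) shows $\varphi$ is increasing, $0<\varphi(r)<1$, and $\varphi(r)\to0$ as $r\to0^{+}$, so $\varphi$ is the natural candidate for the function $f$ in (H). For $u\le v$ set $w(s)=v(s)-u(s)\in[0,\|v-u\|]$; the right-hand inequality in the hypothesis and the monotonicity of $\varphi$ give
\begin{equation*}
0\le\bigl(F(s,u(s))+\lambda u(s)\bigr)-\bigl(F(s,v(s))+\lambda v(s)\bigr)\le\alpha\,w(s)\,\varphi(w(s))\le\alpha\,\varphi(\|v-u\|)\,w(s),
\end{equation*}
and multiplying by $G(t,s)\ge0$, integrating, and using $\int_0^1 G(t,s)\,ds=1/\lambda$ together with $0<\alpha\le\lambda N$ gives, for every $t$,
\begin{equation*}
0\le Au(t)-Av(t)\le\alpha\,\varphi(\|v-u\|)\int_0^1 G(t,s)\,w(s)\,ds\le N\,\varphi(\|v-u\|)\,\|v-u\|,
\end{equation*}
which is exactly the estimate $(\ref{eq*})$ underlying the proof of Theorem 2.1, with $f=\varphi$. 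Thus Theorem 2.1 applies and $A$ has a unique fixed point, i.e. (3.1) has a unique solution.

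The step I expect to be delicate is making this contraction estimate match the precise form of condition (H). What the Green's function produces cleanly is the norm bound $\|Au-Av\|\le N\varphi(\|v-u\|)\|v-u\|$, whereas (H) is phrased as the cone inequality $Au-Av\le\varphi(\|v-u\|)(v-u)$; one must therefore either establish the sharper cone inequality directly or argue that the norm version — which is all that is actually invoked after the first line of the proof of Theorem 2.1 — suffices to run the iteration. The remaining ingredients, namely the elementary facts about $\varphi$, the positivity and the normalization $\int_0^1 G(t,s)\,ds=1/\lambda$ of the Green's function, and the bookkeeping that lets the factor $1/\lambda$ be absorbed by the normal constant $N$ precisely because $\alpha\le\lambda N$, are routine.
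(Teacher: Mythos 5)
Your argument reproduces the paper's proof of Theorem 3.2 essentially step for step: the same Green's function reduction of (3.1) to a fixed-point equation, the same verification that the integral operator is decreasing (from the left-hand inequality of the hypothesis) and that $C(I)$ satisfies condition (\ref{eq}), the same choice $f(t)=t\ln(1+\frac{1}{t})$ with the same elementary facts about it, and the same computation using $G>0$, $\int_0^1 G(t,s)\,ds=\frac{1}{\lambda}$ and $\alpha\leq\lambda N$ to arrive at $\parallel Tu-Tv\parallel\leq Nf(\parallel u-v\parallel)\parallel u-v\parallel$ for comparable $u,v$.

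The delicate point you flag at the end is a genuine gap, and it is present in the paper's proof as well, where it is papered over by the remark that $T$ ``satisfies condition (\ref{eq*})'' and that ``(\ref{eq*}) is deduced by (\ref{eq1})'' --- which is the wrong logical direction: verifying the norm estimate (\ref{eq*}) does not verify the cone inequality (\ref{eq1}) that Theorem 2.1 actually assumes. Indeed, integrating the pointwise bound against $G(t,s)$ yields only
\begin{equation*}
0\leq Tu(t)-Tv(t)\leq \alpha f(\parallel v-u\parallel)\int_0^1 G(t,s)\,(v(s)-u(s))\,ds,
\end{equation*}
and the right-hand side is a weighted average of $v-u$, not a pointwise multiple of $(v-u)(t)$; if $v-u$ vanishes at some $t_0$ without vanishing identically, and $F(s,u(s))+\lambda u(s)-F(s,v(s))-\lambda v(s)$ is strictly positive on a set of positive measure, then $Tu(t_0)-Tv(t_0)>0=f(\parallel v-u\parallel)(v(t_0)-u(t_0))$, so hypothesis (H) fails for $T$. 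What is actually established is the pair of conditions ``$u\leq v\Rightarrow Tv\leq Tu$'' and the norm contraction (\ref{eq*}); to make the application rigorous one must either impose stronger assumptions that yield the cone inequality directly, or (as you suggest) check that the proof of Theorem 2.1 runs through with (\ref{eq*}) in place of (\ref{eq1}) --- but that requires restating and reproving the fixed-point theorem with the weaker hypothesis (in particular re-deriving $u_0\leq A^2u_0$ and the monotone-norm estimates from it), not merely citing Theorem 2.1 as it stands.
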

\begin{proof}
 Problem (3.1) can be written as
\begin{equation*}
u(t)=\int_{0}^{1}G(t,s)[F(s,u)+\lambda u(s)]ds,
\end{equation*}
where $G(t,s)$ is the Green function given by
$$G(t,s)=\begin{cases}
\frac{e^{\lambda(1+s-t)}}{e^{\lambda }-1}, &0\leqslant s<t\leqslant 1;\\
\frac{e^{\lambda(s-t)}}{e^{\lambda }-1}, &0\leqslant t<s\leqslant.
\end{cases}$$

Define $T:C(I)\rightarrow C(I)$by
\begin{equation*}
(Tu)(t)=\int_{0}^{1}G(t,s)[F(s,u)+\lambda u(s)]ds.
\end{equation*}
Note that if $u\in C(I)$ is a fixed point of $T$ then $u\in C^{1}(I)$ is a solution of (3.1).

In what follows, we check that hypotheses of Theorem 2.1 are satisfied.

Clearly, $(C(I), \leq)$ satisfies condition (\ref{eq}), since for $x,y\in C(I)$ the functions $\max\{x,y\}, \min\{x,y\}$ are least upper and greatest lower bounds of $x$ and $y$, respectively.

The operator $T$ is decreasing, since for $u\geq v,$ and using our assumption, we can obtain
\begin{equation*}
F(t,u)+\lambda u\leq F(t,v)+\lambda v,
\end{equation*}
which implies, since $G(t,s)>0,$ that for $t\in I,$
\begin{align*}
(Tu)(t)&=\int_{0}^{1}G(t,s)[F(s,u(s))+\lambda u(s)]ds\\
&\leq \int_{0}^{1}G(t,s)[F(s,v(s))+\lambda v(s)]ds=(Tv)(t),
\end{align*}
Besides, for $u\geq v,$ we have
\begin{align*}
 \|Tu-Tv\| &=\sup_{t\in I}\mid (Tu)(t)-(Tv)(t)\mid\\
 &\leq \sup \int_{0}^{1}G(t,s)\mid F(s,u(s))+\lambda u(s)-F(s,v(s))-\lambda v(s)\mid ds\\
   &\leq \sup \int_{0}^{1}\alpha G(t,s)(u-v)\ln[(1+\frac{1}{u-v})^{u-v}]ds \\
   &\leq \alpha \|u-v\| f(\|u-v\|)\sup \int_{0}^{1}G(t,s)ds\\
   &=\alpha \|u-v\| f(\|u-v\|)\sup_{t\in I}\frac{1}{e^{\lambda}-1}[\frac{1}{\lambda}e^{\lambda(1+s-t)}|_{0}^{t}+\frac{1}{\lambda}e^{\lambda(s-t)}|_{t}^{1}]\\
   &=\alpha \|u-v\| f(\|u-v\|)\frac{1}{\lambda(e^{\lambda}-1)}(e^{\lambda}-1)\\
   &=\alpha\|u-v\| f(\|u-v\|)\frac{1}{\lambda}\\
   &\leq Nf(\|u-v\|)\|u-v\|.
\end{align*}
This implies that $T$ satisfies condition (\ref{eq*}) which can be used to prove the uniqueness of solution. And (\ref{eq*}) is deduced by (\ref{eq1}).

In the above inequalities we choose $f(t)=t\ln(1+\frac{1}{t})$. It is easy to
 prove that $f(t)$ is increasing and $f(t):(0,+\infty)\rightarrow(0,1)$.

Finally, Theorem 2.1 gives that $T$ has an unique fixed point.
\end{proof}
\end{ex}

\subsection*{Competing Interests} The authors declare that they have no competing interests.

\subsection*{Authors' Contributions} J. Mao proved main Theorems. Z. Zhao gave the application of
the manuscript. All authors read and approved the manuscript.

\subsection*{Acknowledgment} The authors would like to thank the referee for his/her careful reading and valuable suggestions.

\end{document}